\newtheorem{theorem}{Theorem}
\newtheorem{lemma}[theorem]{Lemma}
\begin{document}

\title{An improvement of an inequality of Ochem and Rao concerning odd perfect numbers}

\author{Joshua Zelinsky}
\date{}

\maketitle
\vspace{-1.5 cm}

\begin{center}
Iowa State University\\

Email:joshuaz1@iastate.edu
\end{center}

\begin{abstract}

Let $\Omega(n)$ denote the total number of prime divisors of $n$ (counting multiplicity) and let $\omega(n)$ denote the number of distinct prime divisors of $n$. Various inequalities have been proved relating $\omega(N)$ and $\Omega(N)$  when $N$ is an odd perfect number.  We improve on these inequalities.  In particular, we show that if $3 \not| N$, then  $\Omega \geq \frac{8}{3}\omega(N)-\frac{7}{3}$   and if $3 |N$ then $\Omega(N) \geq \frac{21}{8}\omega(N)-\frac{39}{8}.$

\end{abstract}

\section{Introduction}
Let $\Omega(n)$ denote the total number of prime divisors of $n$ (counting multiplicity) and let $\omega(n)$ denote the number of distinct prime divisors of $n$.

Let $N$ be an odd perfect number. Ochem and Rao\cite{OchemRao1}  have proved that $N$ must satisfy \begin{equation}\Omega(N) \geq \frac{18\omega(N) -31}{7}\label{OR1}\end{equation} and \begin{equation}\Omega(N) \geq 2\omega(n) +51.\label{OR2} \end{equation} Note that Ochem and Rao's second inequality is stronger than the first as long as $\omega(N) \leq 81$. Nielsen has shown that $\omega(n) \geq  10 $. \cite{Nielsen}

In this note we improve Ochem and Rao's first inequality. In particular we have:

\begin{theorem} If $N$ is an odd perfect number, with $3\not| N$ then \begin{equation}\Omega(N) \geq \frac{8}{3}\omega(N)-\frac{7}{3}.\label{firstineq}\end{equation}   If $N$ is an odd perfect number, with $3 |N$ then \begin{equation}\Omega(N) \geq \frac{21}{8}\omega(N)-\frac{39}{8}.\label{secondineq}\end{equation}  
\end{theorem}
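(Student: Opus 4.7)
The plan is to analyze the multiplicities of primes in the factorization of $N$ and show that primes contributing small exponents are bounded in number by those contributing larger exponents. Let $n_k$ denote the number of primes $p$ with $p^k$ exactly dividing $N$, so that $\omega(N) = \sum_k n_k$ and $\Omega(N) = \sum_k k\, n_k$. By Euler's classical structure theorem for odd perfect numbers, $N = q^e \prod_i p_i^{2 a_i}$ with $q \equiv e \equiv 1 \pmod{4}$, so at most one prime of $N$ has odd exponent, and every other prime has even exponent at least $2$.

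Rewriting the target inequality $3\Omega(N) \geq 8\omega(N) - 7$ in the form
\[
\sum_{k} (3k - 8)\, n_k \geq -7,
\]
one sees that every exponent $k \geq 4$ contributes nonnegatively, while a prime with exponent $2$ contributes $-2$ and the Euler prime, if it has exponent $1$, contributes $-5$. The problem therefore reduces to bounding $n_2$ from above by (essentially) twice the number of primes of exponent at least $4$, with a small additive slack.

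For each prime $p$ with $p^2$ exactly dividing $N$, $\sigma(p^2) = 1 + p + p^2$ must divide $N$. Every prime divisor of $1 + p + p^2$ is either $3$ or congruent to $1 \pmod{3}$. Under the hypothesis $3 \nmid N$ the prime $3$ is excluded, so $p \equiv 2 \pmod{3}$ and every prime factor of $\sigma(p^2)$ is itself a prime of $N$ congruent to $1 \pmod{3}$. I would then catalogue these induced prime factors carefully, sorting exponent-$2$ primes by whether the additional primes they produce have exponent $2$ or exponent at least $4$ in $N$, and apply a chain or tree argument to the map sending each $p$ to a prime divisor of $\sigma(p^2)$ in order to derive the desired combinatorial inequality.

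The main obstacle is obtaining the sharp constants: improving Ochem--Rao's slope $18/7$ to $8/3$ corresponds precisely to sharpening an implicit ratio $n_2 / n_{\geq 4}$ from $5/2$ down to $2$, and controlling the case where $\sigma(p^2)$ and $\sigma(p'^2)$ share a common prime divisor (so that induced primes are not overcounted) will need the most care, presumably by exploiting the congruence $p^3 \equiv 1 \pmod{r}$ forced on every exponent-$2$ prime $p$ pointing to a given prime $r \mid N$. For the second inequality the same framework applies when $3 \mid N$, using the additional constraint that $\sigma(3^{2 a_3})$ divides $N$ (which forces small primes such as $13 = \sigma(9)$ or $11 \mid \sigma(81)$ to appear in $N$); but since $3$ itself is then typically an unavoidable exponent-$2$ prime, the bound on $n_2$ is weaker, leading to the slightly worse slope $21/8$.
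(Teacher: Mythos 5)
Your outline reproduces the overall shape of the argument (and of Ochem--Rao's): reduce the theorem to a linear inequality in counts of primes by exponent, note that under $3 \nmid N$ every prime $p$ with $p^2\|N$ is $\equiv 2 \pmod 3$ and that the prime factors of $\sigma(p^2)=p^2+p+1$ are $\equiv 1 \pmod 3$ and must reappear in $N$, and then bound the number of exponent-$2$ primes by the supply of primes of exponent $\geq 4$. That reduction is correct, and you correctly locate the hard point: primes $r$ shared between $\sigma(a^2)$ and $\sigma(b^2)$ for distinct exponent-$2$ primes $a,b$, which threaten the count of \emph{distinct} induced primes.

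However, the tool you propose for that hard point is not strong enough, and this is exactly where the paper's new content lies. The observation that $a^3\equiv 1 \pmod r$ forces $r\equiv 1 \pmod 3$ is already fully exploited by Ochem and Rao and, by itself, only yields the slope $18/7$ (your own remark that improving $18/7$ to $8/3$ requires sharpening the ratio from $5/2$ to $2$ shows you need more than a congruence condition on $r$). The paper's key lemma is quantitative: if $a\equiv b \pmod 3$ and $r$ divides both $a^2+a+1$ and $b^2+b+1$, then $r \mid (a-b)(a+b+1)$ together with $r\equiv 1\pmod 3$ forces $r \leq \frac{a+b+1}{5}$ (respectively $\leq \frac{a+b+1}{3}$ in the $1 \bmod 3$ case), hence $\sigma(a^2)/r > \sigma(a^2)^{1/2}$, so a shared prime can never be the \emph{largest} prime factor of $\sigma(a^2)$ when $\sigma(a^2)$ has only two prime factors; the mixed case (one prime from each residue class) is handled by Ochem--Rao's lemma excluding $p^2+p+1=r$, $q^2+q+1=3r$. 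Without this (or some equally strong substitute), your ``chain or tree argument'' cannot certify enough distinct induced primes to reach $8/3$. A second, lesser gap: your bookkeeping with only $n_2$ and $n_{\geq 4}$ is too coarse. The paper must further split the exponent-$2$ primes by the number of prime factors of $\sigma(p^2)$ (one, two, or at least three) and by residue mod $3$, and must also use that each prime dividing $m$ to a power $\geq 2$ contributes at least $4$ to $\Omega$, before a linear combination of the resulting inequalities delivers the stated constants; the $3\mid N$ case then follows from the same system plus $f_3\geq 2$ and $\omega=s+t+2$, not from forcing specific small primes like $13$ or $11$ into $N$.
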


Inequality \ref{firstineq} is always better than inequality \ref{OR1}, while inequality \ref{secondineq}  becomes stronger than inequality \ref{OR1} when $\omega(N) \geq 9$ and thus for all odd perfect numbers by Nielsen's result.

Note that if one only uses Ochem and Rao's original system of inequalities but assumes that $3 \not| N$ then one can improve the constant term of inequality \ref{OR1} but one still has a linear coefficient of $18/7$. So, both cases here do represent non-trivial improvement. 

\section{Proof of the main results}

Our method of proof is very similar to that of Ochem and Rao; they created a series of linear inequalities involving the number of different types of prime factors (both total and distinct) of $N$ and showed that the linear system in question forced a certain lower bound. We will use a similar method, but with additional inequalities.

We we will write $\Omega(N)$ as just $\Omega$ and $\omega(N)$ as $\omega$.

Euler proved that $N$ must have the form $N=q^e m^2$ where $q$ is a prime such that $q \equiv e \equiv 1$ (mod 4), $(q,m)=1$. Traditionally $q$ is called the special prime.  Note that from this one one immediately has $\Omega(N) \geq 2\omega(N) - 1$. For the remainder of this paper we will assume that $N$ is an odd perfect with $q$, $e$ and $m$ given  as above.

The following Lemma is the primary insight that allows us to have a system that is tighter than that of Ochem and Rao: 

\begin{lemma}
If $a$ and $b$ are distinct odd primes and $p$ is a prime such that $p|(a^2+a+1)$ and $p|(b^2+b+1)$. If  $a \equiv b \equiv 2$ (mod 3), then $p \leq \frac{a+b+1}{5}$.  If $a \equiv b \equiv 1$ (mod 3) $p \leq \frac{a+b+1}{3}$.
\label{smallsharedprimes}
\end{lemma}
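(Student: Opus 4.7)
The plan is to reduce to two cases via the identity
\[
(a^2+a+1)-(b^2+b+1)=(a-b)(a+b+1),
\]
so $p$ divides $(a-b)(a+b+1)$. I would split on whether $p\mid a-b$ (Case A) or $p\mid a+b+1$ (Case B). Before the casework, a few preliminaries are useful: $p\neq 2$ because $a^2+a+1$ is odd; $p=3$ divides $a^2+a+1$ exactly when $a\equiv 1\pmod 3$, so under the hypothesis $a\equiv b\equiv 2\pmod 3$ we must have $p>3$, while under $a\equiv b\equiv 1\pmod 3$ the sub-case $p=3$ is immediate since $a,b\geq 7$ gives $(a+b+1)/3\geq 5>3$. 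Whenever $p>3$, the congruence $a^3\equiv 1\pmod p$ forces $a$ to have order $3$ modulo $p$ (otherwise $a\equiv 1$ and then $p\mid 3$), so $p\equiv 1\pmod 3$.

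In Case B, I would write $a+b+1=kp$ and exploit two constraints on $k$: it is odd (since $a+b+1$ is odd), and reducing modulo $3$ with $p\equiv 1\pmod 3$ yields $k\equiv 2\pmod 3$ when $a\equiv b\equiv 2$ and $k\equiv 0\pmod 3$ when $a\equiv b\equiv 1$. The smallest odd $k\equiv 2\pmod 3$ is $5$, and the smallest odd multiple of $3$ is $3$; this gives the two target bounds on $p$ directly.

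Case A is where the only real subtlety lies. Assuming WLOG $a>b$ and writing $a=b+jp$, the congruence $a\equiv b\pmod 3$ combined with $p\equiv 1\pmod 3$ forces $3\mid j$, so a priori $j\geq 3$. However, $j=3$ would make $a=b+3p$ a sum of two odd numbers, hence even, contradicting $a$ being an odd prime; therefore $j\geq 6$. Then $a+b+1\geq 6p+2b+1>5p$, which is stronger than either of the stated inequalities.

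The main obstacle is this sharpening $j\geq 6$ in Case A: the mod-$3$ input alone yields only $j\geq 3$, which is not enough to recover $p\leq(a+b+1)/5$ when $b$ is small relative to $p$, so one genuinely needs the supplementary parity observation that $b+3p$ is even. Once that is in hand, all the remaining estimates have comfortable slack and the lemma follows.
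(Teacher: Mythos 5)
Your proposal is correct and follows essentially the same route as the paper: factor the difference as $(a^2+a+1)-(b^2+b+1)=(a-b)(a+b+1)$, use $p\equiv 1\pmod 3$, and in each case combine parity with the mod-$3$ congruence to force either $6p\mid a-b$ or a cofactor $k\geq 5$ (respectively $k\geq 3$). Your explicit handling of the $p=3$ subcase when $a\equiv b\equiv 1\pmod 3$ is a small point of extra care that the paper glosses over with ``nearly identical,'' but it does not change the argument.
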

\begin{proof} We will prove this when $a \equiv b \equiv 2$ (mod 3) (the 1 mod 3 proof is nearly identical). 
Without loss of generality, assume that $a> b$. Note that one must have $p \equiv 1$ (mod 3). We have $$p|(a^2+a+1) - (b^2+b+1)= (a-b)(a+b+1).$$ So either $p|a-b$ or $p|a+b+1$. In the first case, we note that $6|a-b$, so $6p|a-b$ and  thus $$p \leq \frac{a-b}{6} \leq \frac{a+b+1}{5}.$$ In the second case, we have that $p|a+b+1$ gives us $pk=a+b+1$ for some $k$ with $k \equiv 5$ (mod 6) and so $k \geq  5$. Thus, $$p \leq \frac{a+b+1}{5}.$$ Thus in both cases we have the desired inequality. 

\end{proof}

Note that we do not have a version of Lemma \ref{smallsharedprimes}  when $a \equiv 1$ (mod 3) and $b \equiv 2$ (mod 3), since one cannot in that case get beyond $p\leq a+b+1$, and the case of $a=7$, $b=11$ and $p=19$ shows that one can in fact have $p=a+b+1$ and for the method we will use our lemma we need an inequality strong enough that we can conclude that $p < \max(a,b)$.

We will also need the following result, which is Lemma 3 in Ochem and Rao:
\begin{lemma} Let $p$, $q$ and $r$ be positive integers. If $p^2+p+1=r$ and $q^2+q+1=3r$ then $p$ is not an odd prime. 
 
\end{lemma}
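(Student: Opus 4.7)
The plan is to exploit the primality of $p$ by reducing the defining equation modulo $p$. From $q^2+q+1=3r=3(p^2+p+1)$ one obtains $q^2+q-2 \equiv 0 \pmod p$, and this factors as $(q-1)(q+2)\equiv 0 \pmod p$. Since $p$ is assumed to be an odd prime, either $q\equiv 1\pmod p$ or $q\equiv -2\pmod p$. This is the crucial use of primality; when $p$ is composite the factors need not split, and indeed composite ``solutions'' such as $(p,q)=(9,16)$ really do exist and will show up at the end of the case analysis as obstructions that must be discarded.

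Since $p\geq 3$, each of the two residue classes corresponds to exactly one form of $q$: either $q = kp+1$ for some integer $k\geq 0$, or $q = kp-2$ for some integer $k\geq 1$. I would substitute each form into $q^2+q+1=3(p^2+p+1)$, expand, and divide through by $p$. After routine simplification the first form yields
\[
p(k^2-3)=3(1-k),
\]
and the second form yields
\[
p(k^2-3)=3(k+1).
\]
In the first equation, the right-hand side is non-positive while $k^2-3>0$ for $k\geq 2$, and $k\in\{0,1\}$ forces $p\leq 0$; so no positive integer $p$ arises. In the second equation, $k=1$ gives $p=-3$, and for $k\geq 5$ one has $k^2-3>3(k+1)$, forcing $p<1$; the remaining values $k=2,3,4$ yield $p=9$, $p=2$, and $p=15/13$ respectively.

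The only positive integer values of $p$ that survive are therefore $p=9$ and $p=2$, neither of which is an odd prime, which is exactly what the lemma asserts. There is no real obstacle in this plan: a single mod-$p$ reduction collapses the Diophantine problem to a finite enumeration in $k$, and the remaining casework is purely mechanical. The only minor care needed is to note that $p\geq 3$ makes the Euclidean representations $q=kp+1$ and $q=kp-2$ well-defined and disjoint, so that the two residue cases truly cover everything.
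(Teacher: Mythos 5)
Your proof is correct. Note, however, that the paper you were asked to match does not prove this statement at all: it simply imports it as Lemma~3 of Ochem and Rao, so your argument is a self-contained elementary substitute rather than a variant of an in-paper proof. Your route --- reduce $q^2+q+1=3(p^2+p+1)$ modulo $p$ to get $(q-1)(q+2)\equiv 0 \pmod p$, use primality to split into $q=kp+1$ or $q=kp-2$, and then solve the resulting linear-in-$p$ equations $p(k^2-3)=3(1-k)$ and $p(k^2-3)=3(k+1)$ by finite enumeration in $k$ --- checks out: the only positive integer values produced are $p=2$ (from $k=3$, i.e.\ $(p,q)=(2,4)$) and $p=9$ (from $k=2$, i.e.\ $(p,q)=(9,16)$), and neither is an odd prime, which contradicts the standing assumption and proves the lemma. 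It is a nice feature that your analysis surfaces exactly the true solutions $(2,4)$ and $(9,16)$ of the underlying Pell-type equation, making clear that both hypotheses ``odd'' and ``prime'' are needed. Two cosmetic points: when $p=3$ the residues $1$ and $-2$ coincide, so the two cases are not disjoint (harmless, since you only need that at least one occurs); and in the first case $k=1$ gives $p=0$, which your phrase ``forces $p\le 0$'' does cover, but it is worth stating explicitly.
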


Now, for the proof of the main result:\\

We will write $$S= \prod_{p||m, p \neq 3} p$$ and $$T =  \prod_{p^2 | m, p \neq 3} p.$$ We will set $S=S_1S_2S_3$ where a prime $p$ appears in $S_i$ for $1 \leq i \leq 2$ if $\sigma(p^2)$ is a product of $i$ primes; $S_3$ will contain all the primes of $S$ where $\sigma(p^2)$ has at least 3 prime factors. We will write  $s= \omega(S)$ and write $t= \omega(T)$. We define $s_1$, $s_2$ and $s_3$ similarly. 

We will write $S_{i,j}$ to be the primes from $S_i$ which are $j$ (mod 3).  In a similar way to  use lower case letters to denote the number of primes in each term as before. That is, we set $s_{i,j}=\omega(S_{i,j})$ and will note that $s_{1,1}=0$. Thus, we do not need to concern ourselves with this split for $S_1$  since all primes in $S_1$ are 2 (mod 3) there is no need to split $S_1$ further.

We have the special exponent is at least $1$:  \begin{equation} 1 \leq e \label{specialexists} \end{equation}

We have the following straightforward equations from breaking down the definitions of $s_1,s_2,$ and $s_3$:

\begin{equation}
    s=s_1 +s_2 + s_3. \label{sbreakdown}
\end{equation}

Similarly, we have:  \begin{equation}
s_2=s_{2,1} + s_{2,2}, \label{s2breakdown}
\end{equation} and we have \begin{equation}
s_3=s_{3,1} + s_{3,2} \label{s3breakdown}
\end{equation}

We  define $f_4$ as the number of prime divisors (counting multiplicity) in $N$ which are not the special prime and are raised to at least the fourth power. From simple counting we obtain: 

\begin{equation} e+f_3+2s +f_4 \leq \Omega.  \label{bigomegalower2}
\end{equation} 

\begin{lemma} We have \begin{equation}
 s_1 + s_{2,2}  \leq t + s_{2,1}+ s_{3,1} +1 \label{s1s22upper}\end{equation} 

and \begin{equation} s_1   \leq t +  s_{3,1}+1.
\label{s1s21upper}
\end{equation}
\label{ts1s2}
\end{lemma}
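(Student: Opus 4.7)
The plan is to prove each inequality by constructing an explicit injection from the set on the left into a set whose cardinality is the right-hand side.

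For inequality~(\ref{s1s22upper}), I would send each $p \in S_1 \cup S_{2,2}$ to the largest prime factor of $\sigma(p^2) = p^2+p+1$. Because every prime in $S_1 \cup S_{2,2}$ is $\equiv 2 \pmod 3$, every prime factor of $p^2+p+1$ is $\equiv 1 \pmod 3$; in particular any such prime that divides $N$ must lie in $\{q\} \cup S_{2,1} \cup S_{3,1} \cup T$, since it cannot be $3$, cannot lie in $S_1$ (as $s_{1,1}=0$), and cannot lie in $S_{2,2}$ or $S_{3,2}$. The largest prime factor of $\sigma(p^2)$ is at least $\sqrt{p^2+p+1} > p$. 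If two distinct $p, p' \in S_1 \cup S_{2,2}$ were sent to the same prime $r$, then $r$ would divide both $\sigma(p^2)$ and $\sigma(p'^2)$, and Lemma~\ref{smallsharedprimes} would force $r \leq (p+p'+1)/5 < \max(p, p')$, contradicting $r > \max(p, p')$.

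For inequality~(\ref{s1s21upper}), I would iterate the idea. Starting from $a \in S_1$, construct a chain $x_0 = a$, $x_1 = \sigma(a^2)$, and, as long as $x_i \in S_{2,1}$, set $x_{i+1} = \sigma(x_i^2)/3$. Each $x_i$ for $i \geq 1$ is an odd prime divisor of $N$ with $x_i \equiv 1 \pmod 3$: for $i=1$ this holds by definition of $S_1$; for larger $i$, when $x_i \in S_{2,1}$, $\sigma(x_i^2)$ has exactly two prime factors, one of which must be $3$ (since $x_i \equiv 1 \pmod 3$), so the other is $x_{i+1}$. Because $(x_i - 1)^2 > 0$ forces $x_{i+1} > x_i$, the chain strictly increases and therefore terminates at some $x_k \notin S_{2,1}$, which by the classification of $N$'s prime divisors must lie in $\{q\} \cup S_{3,1} \cup T$. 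Set $\psi(a) := x_k$.

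To show $\psi$ is injective, suppose $\psi(a_1) = \psi(a_2)$ for distinct $a_1, a_2 \in S_1$. The two chains both end at the same terminal vertex; since they start at different $S_1$ primes (and since no $a_i$ appears anywhere except the start of its own chain), they must first meet at some $y$ whose two predecessors in the chain graph are distinct. A predecessor of $y$ is either an $a' \in S_1$ with $a'^2 + a' + 1 = y$ or an $r' \in S_{2,1}$ with $(r'^2 + r' + 1)/3 = y$; each relation uniquely determines its input, so the two predecessors cannot both be of the same type. Hence we get primes $a' \in S_1$ and $r' \in S_{2,1}$ with $r'^2 + r' + 1 = 3(a'^2 + a' + 1)$, which is exactly the configuration that Lemma~3 rules out for $a'$ an odd prime---the desired contradiction.

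I expect the main obstacle to be the injectivity step in the second inequality: correctly identifying the two predecessors at the first merge point and recognizing the resulting identity as a direct instance of Lemma~3's forbidden configuration. The remaining verifications (that each chain step produces a prime of $N$, that the chain strictly increases, and that it terminates in the stated target) are routine arithmetic.
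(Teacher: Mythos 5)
Your argument is correct. For inequality (\ref{s1s22upper}) it is essentially the paper's own proof: you map each $p \in S_1 \cup S_{2,2}$ to the largest prime factor of $\sigma(p^2)$, observe that this prime exceeds $p$ while Lemma \ref{smallsharedprimes} forces any shared prime factor of two such $\sigma(p^2)$ to be smaller than the larger of the two sources, and then count the available $1 \pmod 3$ targets $\{q\} \cup T \cup S_{2,1} \cup S_{3,1}$. For inequality (\ref{s1s21upper}) you take a genuinely different route. The paper applies the same one-step map to $S_1 \cup S_{2,1}$, whose distinct largest contributed primes again land in $\{q\} \cup T \cup S_{2,1} \cup S_{3,1}$; this gives $s_1 + s_{2,1} \leq t + s_{2,1} + s_{3,1} + 1$, and the $s_{2,1}$ terms cancel (collisions inside $S_{2,1}$ are handled as in Case I, and the $S_1$-versus-$S_{2,1}$ collision is exactly the configuration excluded by the Ochem--Rao lemma). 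You instead iterate the map through $S_{2,1}$ until the strictly increasing chain exits, producing a direct injection of $S_1$ into $\{q\} \cup T \cup S_{3,1}$; injectivity at the first merge point reduces, just as in the paper, to the Ochem--Rao lemma, while monotonicity of $x \mapsto x^2+x+1$ rules out same-type predecessors and guarantees termination. The paper's version is shorter and treats both inequalities uniformly with one counting scheme; your chain construction costs extra bookkeeping (termination, the first-meeting argument), but it buys a map that avoids $S_{2,1}$ altogether, so the bound $s_1 \leq t + s_{3,1} + 1$ appears without any cancellation, and it makes explicit that Lemma \ref{smallsharedprimes} is not actually needed for the second inequality.
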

\begin{proof}

The claim will be proven if we can show that each $p$ in $S_1S_{2,i}$ has to contribute at least one distinct prime (since then one of the primes may be the special prime and the other primes must all contribute to $t$. This is trivial for $S_1$ (and was used in Ochem and Rao's result). We will show this by showing that no two prime divisors of $S_1S_{2,i}$ can contribute the same largest prime of the primes they contribute. We have two cases we need to consider, both primes arising from $S_{2,i}$ or one arising from $S_{2,i}$ and one arising from $S_1$.\\

Case I: Assume we have two prime divisors of $S_{2,2}$, $a$ and $b$ with $a>b $ and assume they have some shared prime factor $q$ which divides both $\sigma(a^2)=a^2 + a +1$ and $\sigma(b^2)=b^2 +b + 1$. Since we have $a \equiv b \equiv $ 2 (mod 3), we may apply Lemma \ref{smallsharedprimes} to conclude that that $$q \leq \frac{a+b+1}{5} \leq \frac{3a}{5}.$$ Thus, $$\frac{\sigma(a^2)}{q} > \frac{a^2}{3a/5} = 5/3a > (a^2+a+1)^{1/2}.$$ But $\sigma(a^2)$ only has two prime factors, and this shows that the shared contributed prime cannot be the largest prime contributed by $a$.  The case of two primes dividing $S_{2,1}$ is similar. \\

Case II: Assume that we have a prime $a$ dividing $S_1$ and a prime $b$ dividing $S_{2,i}$. If $i=2$ then the same logic as above works. So assume that $i=1$, and $b$ $\equiv$ (1 mod 3). Thus we have a prime $p$ such that $a^2+a+1=p$ and $b^2+b+1=3p$ for some prime $p$. This is precisely the situation ruled out by Ochem and Rao's Lemma. 

\end{proof}

Next we have \begin{equation}s_{2,1}+s_{3,1} \leq f_3, \label{3lower} \end{equation} since if $x \equiv 1$ (mod 3), then $x^2+x+1 \equiv 0$ (mod 3).

We also have by counting all the $1$ (mod $3$) primes which are contributed by primes in $S$: $$s_1+2s_{2,2}+3s_{3,2} + s_{2,1} + 2s_{3,1} \leq f_4+e + 2s_{2,1}+2s_{3,1}. $$ This simplifies to:

\begin{equation}s_1+2s_{2,2}+3s_{3,2}   \leq f_4+e +s_{2,1}. \label{sbruteupper} \end{equation}

And we of course have \begin{equation}4t \leq f_4 \label{4tf4}.\end{equation}

If $3 \not| N$ then our system of equations and inequalities also includes the additional constraint $f_3 =0$ as well \begin{equation}
    \omega= s+ t + 1.  \label{omegabreakdownwithout3}
\end{equation}

Since $f_3=0$ we have  $s_{2,1}=s_{3,1}=0$ and after zeroing those variables we obtain inequality \ref{firstineq} by taking $7/9 \times  {\bf{(\ref{specialexists})}} +  2/3 \times {\bf{(\ref{sbreakdown})}} + 2/3 \times {\bf{(\ref{s2breakdown})}} + 2/3 \times {\bf{(\ref{s3breakdown})}} + 1 \times {\bf{(\ref{bigomegalower2})}} + 4/9 \times {\bf{(\ref{s1s22upper})}} +2/9 \times {\bf{(\ref{sbruteupper})}} +7/9 \times {\bf{(\ref{4tf4})}} + 8/3 \times {\bf{(\ref{omegabreakdownwithout3})}}   $ where the bold numbers represent the corresponding numbered equation or inequality.\\

If $3|N$ we have $2 \leq f_3$ and  
\begin{equation}
    \omega= s+ t + 2, \label{omegabreakdownwith3} 
\end{equation}    
    
    Similar to the previous case, the linear combination $3/4 \times  {\bf{(\ref{specialexists})}} +5/8\times {\bf{(\ref{sbreakdown})}} + 5/8 \times {\bf{(\ref{s2breakdown})}} + 5/8 \times {\bf{(\ref{s3breakdown})}} + 1 \times {\bf{(\ref{bigomegalower2})}} + 1/8 \times {\bf{(\ref{s1s22upper})}} + 1/4 \times {\bf{(\ref{s1s21upper})}}+1\times {\bf{(\ref{3lower})}} +1/4\times  {\bf{(\ref{sbruteupper})}} +3/4 \times {\bf{(\ref{4tf4})}} +21/8 \times {\bf{(\ref{omegabreakdownwith3})}}$ yields inequality \ref{secondineq}.\\
    
{\bf Acknowledgments}     Pascal Ochem greatly assisted in early drafts of this paper both in exposition and in clarifying the results. Maria Stadnik also made helpful suggestions.

\end{document}